\begin{document}

\theoremstyle{plain}
\newtheorem{theorem}{Theorem}
\newtheorem{corollary}[theorem]{Corollary}
\newtheorem{lemma}[theorem]{Lemma}
\newtheorem{proposition}[theorem]{Proposition}

\theoremstyle{definition}
\newtheorem{definition}[theorem]{Definition}
\newtheorem{example}[theorem]{Example}
\newtheorem{conjecture}[theorem]{Conjecture}

\theoremstyle{remark}
\newtheorem{remark}[theorem]{Remark}

\begin{center}
\vskip 1cm{\LARGE\bf Some identities involving Appell polynomials}
\vskip 1cm
\large
Miloud Mihoubi\\
\small
USTHB, Faculty of Mathematics, RECITS Laboratory, Algiers, Algeria\\
{\tt mmihoubi@usthb.dz or miloudmihoubi@gmail.com}\\
\large
Said Taharbouchet\\
\small
USTHB, Faculty of Mathematics, RECITS Laboratory, Algiers, Algeria\\
{\tt staharbouchet@usthb.dz} \\
\ \\
\end{center}

\vskip .2 in

\begin{abstract}
In this paper, by the classical umbral calculus method, we
establish identities involving the Appell polynomials and extend some existing identities.
\end{abstract}

\textbf{Keywords:} Classical umbral calculus; Appell polynomials; identities

\vskip .2 in

\textbf{2010 MSC:} 05A40; 11B68; 70H03

\section{Introduction}
Let $\left( a_{n}\right) _{n\geq 0}$ be a sequence of real numbers such that $a_{0}=1,$ let $\textbf{A}^{n}$
be the umbra defined by \ $\textbf{A}^{n}:=a_{n}$ and let $\left(
f_{n}\left( x\right)\right) _{n\geq 0}\ $ be a sequence of Appell polynomials defined by \cite%
{appel}:%
\begin{equation*}
\label{1}\sum_{n\geq 0}f_{n}\left( x\right) \frac{t^{n}}{n!}=F\left( t\right) \exp
\left( xt\right) =\exp \left( \left(\textbf{A}+x\right) t\right) ,
\end{equation*}%
where $F\left( t\right) =1+\sum_{n\geq 1}a_{n}\frac{t^{n}}{n!}=\exp \left( \textbf{A} t\right).$
So,  $f_{n}\left( x\right) $ admits the umbral representation
\begin{equation*}
\label{2} f_{n}\left( x\right) =\left( \mathbf{A}+x\right) ^{n}.
\end{equation*}%
The Appell polynomials appear in many areas of mathematics including special functions, analysis, combinatorics and number theory. Numerous interesting properties for these polynomials can be found in the literature, see for example \cite{appel,car,mac,rio,sri,sri1}. Recently, many authors took interest on symmetric identities involving Bernoulli polynomials such as the works of Kaneto \cite{kan}, Gessel \cite{gess} and He et al. \cite{he}. The present work is motivated by the work of Di Crescenzo et al. \cite{cre} on umbral calculus, the work of Gessel \cite{gess} on some applications of the classical umbral calculus, the work of He et al. \cite{he} on some symmetric identities and the work of Pita et al. \cite{Ruiz} on some identities for Bernoulli polynomials  and Benyattou et al. \cite{ben} on some applications of Bell umbra. We use the umbral calculus method to deduce symmetric identities involving Appell polynomials  and generalize some existing identities on Bernoulli and Euler polynomials.

\section{Appell polynomials via classical umbral calculus}

The following proposition gives a general symmetric identity for Appell polynomials and generalizes Theorem 1.1 of He et al. \cite{he}.
\begin{proposition}
\label{P0}Let $n,m$ be non-negative integers. There holds%
\begin{equation*}
\label{3}\sum_{k=0}^{n}\binom{n}{k}y^{n-k}f_{m+k}\left( x\right) =\sum_{k=0}^{m}%
\binom{m}{k}\left( -y\right) ^{m-k}f_{n+k}\left( x+y\right) .
\end{equation*}
\end{proposition}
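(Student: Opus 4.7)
The plan is to apply the umbral representation $f_n(x)=(\mathbf{A}+x)^n$ stated in the introduction, and then recognize each side of the claimed identity as a binomial expansion in the umbra $\mathbf{A}$. The strategy is essentially the same symmetric trick used in the Bernoulli case: pull one of the factors out as $(\mathbf{A}+x)^m$ (respectively $(\mathbf{A}+x+y)^n$) and let the remaining sum telescope via the binomial theorem.

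Concretely, I would first rewrite the left-hand side as
\begin{equation*}
\sum_{k=0}^{n}\binom{n}{k}y^{n-k}f_{m+k}(x)
 = \sum_{k=0}^{n}\binom{n}{k}y^{n-k}(\mathbf{A}+x)^{m+k}
 = (\mathbf{A}+x)^{m}\sum_{k=0}^{n}\binom{n}{k}y^{n-k}(\mathbf{A}+x)^{k},
\end{equation*}
and observe that the inner sum is, by the (umbral) binomial theorem, $(\mathbf{A}+x+y)^{n}$. Hence the left-hand side equals $(\mathbf{A}+x)^{m}(\mathbf{A}+x+y)^{n}$ after applying the evaluation functional $\mathbf{A}^{n}\mapsto a_{n}$. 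I would then perform the analogous computation on the right-hand side, factoring out $(\mathbf{A}+x+y)^{n}$ and using the binomial theorem to get
\begin{equation*}
\sum_{k=0}^{m}\binom{m}{k}(-y)^{m-k}(\mathbf{A}+x+y)^{n+k}
 = (\mathbf{A}+x+y)^{n}\bigl((\mathbf{A}+x+y)-y\bigr)^{m}
 = (\mathbf{A}+x+y)^{n}(\mathbf{A}+x)^{m}.
\end{equation*}
Since the two umbral expressions coincide, the identity follows by applying the evaluation map coefficient-wise.

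There is no real obstacle: the only point that needs a little care is the justification for factoring $(\mathbf{A}+x)^m$ (resp.\ $(\mathbf{A}+x+y)^n$) out of the sum \emph{before} evaluating $\mathbf{A}^n\mapsto a_n$. This is legitimate in the classical umbral framework because the expressions in $\mathbf{A}$ are formal polynomials, and the evaluation functional is linear; the binomial theorem for a single umbra therefore applies in the usual way, exactly as in Di Crescenzo et al.\ \cite{cre} and Gessel \cite{gess}. Thus the proof reduces to two applications of the binomial theorem, making the result essentially a one-line umbral calculation.
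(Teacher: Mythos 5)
Your proposal is correct and is essentially the paper's own argument read in the opposite direction: the paper evaluates the single umbral product $\left(\mathbf{A}+x+y\right)^{n}\left(\mathbf{A}+x\right)^{m}$ in two ways (once via $\mathbf{A}+x=\mathbf{A}+x+y-y$ and once via $\mathbf{A}+x+y$), while you start from each side of the identity and reduce both to that same product. The justification you give for factoring before applying the linear evaluation functional is exactly the standard one, so nothing is missing.
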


\begin{proof}
By the umbral representation $f_{n}\left( x\right) =\left( \mathbf{A}+x\right) ^{n}$ we get on the one hand
\begin{eqnarray*}
\left( \mathbf{A}+x+y\right) ^{n}\left( \mathbf{A}+x\right) ^{m} &=&
\left( \mathbf{A}+x+y\right) ^{n}\left( \mathbf{A}+x+y-y\right) ^{m} \\
&=&\sum_{k=0}^{m}\binom{m}{k}\left( -y\right) ^{m-k}\left( \mathbf{A}%
+x+y\right) ^{n+k} \\
&=&\sum_{k=0}^{m}\binom{m}{k}\left( -y\right) ^{m-k}f_{n+k}\left( x+y\right)
\end{eqnarray*}
and on the other hand, we have

\begin{equation*}
\left( \mathbf{A}+x+y\right) ^{n}\left( \mathbf{A}+x\right) ^{m}=
\sum_{k=0}^{n}\binom{n}{k}y^{n-k}\left( \mathbf{A}+x\right) ^{m+k} =
\sum_{k=0}^{n}\binom{n}{k}y^{n-k}f_{m+k}\left( x\right).
\end{equation*}
Hence, the two expressions of $\left( \mathbf{A}+x+y\right) ^{n}\left( \mathbf{A}+x\right) ^{m}$ give the desired identity.
\end{proof}

\noindent By definition of the sequence $\left(f_{n}\left( x\right)\right) _{n\geq 0}\ $  we get
$D_{x}^{p}f_{n}\left( x\right) =\left( n\right)_{p}f_{n-p}\left( x\right),$
where $D_{x}^{p}=\frac{d^{p}}{dx^{p}}, \left( x\right)_{p}:=x(x-1)\cdots (x-p+1)$ if $p\geq1$ and $\left( x\right)_{0}:=1.$ So, by derivation $p$ times the two sides of the identity of Proposition \ref{P0} to respect to $x$  we obtain

\begin{corollary}
\label{CC}Let $n,m,p$ be non-negative integers such that $p\leq \min(n,m).$ There holds%
\begin{equation*}
\label{4}\sum_{k=0}^{n}\binom{n}{k}\binom{m+k}{p}y^{n-k}%
f_{m-p+k}\left( x\right) =\sum_{k=0}^{m}\binom{m}{k}\binom{n+k}{p}\left( -y\right)
^{m-k}f_{n-p+k}\left( x+y\right) .
\end{equation*}%
\end{corollary}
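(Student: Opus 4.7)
The plan is to follow the hint given in the text: apply the differential operator $D_x^p$ to both sides of the identity in Proposition \ref{P0} and then rewrite falling factorials as binomial coefficients.

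First I would recall the differentiation rule $D_x^p f_n(x) = (n)_p f_{n-p}(x)$, which follows immediately from the generating function $\sum_{n\ge 0} f_n(x)\,t^n/n! = F(t)\exp(xt)$: differentiating $p$ times in $x$ multiplies the right-hand side by $t^p$, which shifts the index on the left by $p$ and brings down the falling factorial. Note that for $k\ge 0$ and $p\le\min(n,m)$ all indices that appear after shifting remain non-negative, so no boundary issues arise.

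Next I would apply $D_x^p$ term-by-term to the identity of Proposition \ref{P0}, treating $y$ as a constant (so that $D_x = D_{x+y}$ on the right-hand side). The left-hand side becomes
\begin{equation*}
\sum_{k=0}^{n}\binom{n}{k}y^{n-k}(m+k)_{p}\,f_{m-p+k}(x),
\end{equation*}
and the right-hand side becomes
\begin{equation*}
\sum_{k=0}^{m}\binom{m}{k}(-y)^{m-k}(n+k)_{p}\,f_{n-p+k}(x+y).
\end{equation*}

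Finally I would use the elementary identities $(m+k)_{p}=p!\binom{m+k}{p}$ and $(n+k)_{p}=p!\binom{n+k}{p}$, then cancel the common factor $p!$ from both sides to obtain exactly the claimed identity. There is no real obstacle here; the only point that deserves a brief comment is that the hypothesis $p\le\min(n,m)$ guarantees that $m-p+k$ and $n-p+k$ are non-negative for every $k\ge 0$, so the resulting Appell polynomials are well defined and Proposition \ref{P0} can be invoked legitimately after differentiation.
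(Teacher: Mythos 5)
Your proposal is correct and follows exactly the paper's route: the paper obtains the corollary by differentiating the identity of Proposition \ref{P0} $p$ times with respect to $x$ using $D_{x}^{p}f_{n}(x)=(n)_{p}f_{n-p}(x)$, which is precisely what you do, with the conversion $(m+k)_{p}=p!\binom{m+k}{p}$ and cancellation of $p!$ made explicit. No issues.
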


\noindent Let now $\left(f_{n}^{(\alpha )}\left(x\right)\right) _{n\geq 0}$ be the sequence of Appell polynomials defined, for any real number $\alpha$, by \cite%
{appel}:%
\begin{equation*}
\label{5}\sum_{n\geq 0}f_{n}^{(\alpha )}\left( x\right) \frac{t^{n}}{n!}=\left(F\left( t\right)\right) ^{\alpha }\exp
\left( xt\right) \ \ \texttt{with} \ \ f_{n}^{(1 )}\left( x\right):=f_{n}\left( x\right).
\end{equation*}%

\noindent The following lemma gives some properties of the sequence $(f_{n}^{\left(  \alpha  \right)  }\left(  x\right) )$  and has interesting generalizations on existing identities.

\begin{lemma}
\label{LL}For any real number $\alpha$  there hold%
\begin{align*}
f_{n}^{\left(  \alpha \right)  }\left( \textbf{A}+x\right)&=f_{n}^{\left(  \alpha +1 \right)  }\left( x\right), \\
\left(\alpha +1\right)\left( \textbf{A}+x\right) f_{n}^{\left( \alpha \right) }\left( \textbf{A}+x\right)
&=f_{n+1}^{\left( \alpha +1\right) }\left( x\right) +\alpha xf_{n}^{\left( \alpha +1\right) }\left( x\right).
\end{align*}
\end{lemma}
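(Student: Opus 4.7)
The plan is to derive both identities by computing exponential generating functions (EGFs), using the umbral identity $\exp(\mathbf{A}t) = F(t)$ that is built into the paper's setup.

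For the first identity, I would substitute $y = \mathbf{A}+x$ into the defining EGF $\sum_{n\geq 0} f_n^{(\alpha)}(y)\, t^n/n! = F(t)^\alpha \exp(yt)$. The right-hand side then factors as
\[ \sum_{n\geq 0} f_n^{(\alpha)}(\mathbf{A}+x) \frac{t^n}{n!} = F(t)^\alpha \exp(\mathbf{A}t)\exp(xt) = F(t)^{\alpha+1}\exp(xt), \]
which is precisely the EGF of $(f_n^{(\alpha+1)}(x))_{n\geq 0}$. Equating coefficients of $t^n/n!$ yields the first claim.

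For the second identity, I would introduce an auxiliary umbra $\mathbf{B}$ independent of $\mathbf{A}$ with $\exp(\mathbf{B}t) = F(t)^\alpha$, so that $f_n^{(\alpha)}(\mathbf{A}+x) = (\mathbf{B}+\mathbf{A}+x)^n$. Using $(\mathbf{A}+x)\exp(\mathbf{A}t) = F'(t) + xF(t)$ (since $\mathbf{A}\exp(\mathbf{A}t)$ evaluates to $\sum a_{n+1}\, t^n/n! = F'(t)$), the EGF of the left-hand side is
\[ \sum_{n\geq 0}(\alpha+1)(\mathbf{A}+x) f_n^{(\alpha)}(\mathbf{A}+x)\frac{t^n}{n!} = (\alpha+1)(\mathbf{A}+x)\exp\bigl((\mathbf{B}+\mathbf{A}+x)t\bigr) = (\alpha+1) F(t)^\alpha [F'(t) + xF(t)]\exp(xt). \]
On the other hand, the EGF of the right-hand side equals
\[ \frac{d}{dt}\bigl[F(t)^{\alpha+1}\exp(xt)\bigr] + \alpha x F(t)^{\alpha+1}\exp(xt) = (\alpha+1)F(t)^\alpha[F'(t) + xF(t)]\exp(xt). \]
The two EGFs agree, and comparing coefficients of $t^n/n!$ gives the second identity.

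The main subtlety is interpreting $(\mathbf{A}+x) f_n^{(\alpha)}(\mathbf{A}+x)$ where the umbra $\mathbf{A}$ occurs both outside and inside the polynomial and must be treated as a single umbral symbol (so that powers combine via $\mathbf{A}^j\cdot \mathbf{A}^k = \mathbf{A}^{j+k}\mapsto a_{j+k}$ before any evaluation). Introducing the independent auxiliary umbra $\mathbf{B}$ encoding $F(t)^\alpha$ separates the $F(t)^\alpha$ factor from the $F(t)$ factor on which $(\mathbf{A}+x)$ acts, converting the umbral product into a routine formal-series manipulation.
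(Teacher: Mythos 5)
Your proof is correct and follows essentially the same route as the paper: both identities are obtained by computing the exponential generating function of $f_n^{(\alpha)}(\mathbf{A}+x)$, resp.\ $(\mathbf{A}+x)f_n^{(\alpha)}(\mathbf{A}+x)$, and recognizing the result as $F(t)^{\alpha+1}e^{xt}$, resp.\ a combination of $D_t\left(F(t)^{\alpha+1}e^{xt}\right)$ and $F(t)^{\alpha+1}e^{xt}$. The only differences are cosmetic: your auxiliary umbra $\mathbf{B}$ makes explicit the evaluation step that the paper performs implicitly by treating $F(t)^{\alpha}$ as a scalar factor, and by comparing the two sides' generating functions directly (rather than dividing by $\alpha+1$ as the paper does) you avoid the separate treatment of the case $\alpha=-1$.
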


\begin{proof}
Replace $x$ by $\textbf{A}+x$ in the above generating function to obtain%
\begin{align*}
\sum_{n\geq0}f_{n}^{\left(  \alpha \right)  }\left( \textbf{A}+x\right)  \frac{t^{n}}{n!}
& =\left(F\left( t\right)\right) ^{\alpha }e^{\left( \textbf{A}+x\right)  t}\\
& =\left(F\left( t\right)\right)^{\alpha}\sum_{n\geq0}\left(\textbf{A}+x\right)  ^{n}\frac{t^{n}}{n!}\\
& =\left(F\left( t\right)\right)^{\alpha}\sum_{n\geq0}f_{n}\left(x\right)  \frac{t^{n}}{n!} \\
& =\left(F\left( t\right)\right)^{\alpha+1}e^{xt}
\\ &=\sum_{n\geq0}f_{n}^{\left(  \alpha+1\right)  }\left(  x\right)
\frac{t^{n}}{n!},%
\end{align*}
from which the first identity follows.
\\ For $\alpha=-1$ the second identity is obvious and for $\alpha\neq -1$ we have
\begin{align*}
&\sum_{n\geq 0}\left( \textbf{A}+x\right) f_{n}^{\left( \alpha \right) }\left(
\textbf{A}+x\right) \frac{t^{n}}{n!}  \\&=\left( F\left( t\right) \right) ^{\alpha
}\left( \textbf{A}+x\right) \exp \left( \left( \textbf{A}+x\right) t\right)  \\
&=\left( F\left( t\right) \right) ^{\alpha }\sum_{n\geq 0}\left( \textbf{A}+x\right)
^{n+1}\frac{t^{n}}{n!} \\
&=\left( F\left( t\right) \right) ^{\alpha }\sum_{n\geq 0}f_{n+1}\left(
x\right) \frac{t^{n}}{n!} \\ &=\left( F\left( t\right) \right) ^{\alpha }D_{t}\left( F\left( t\right)
\exp \left( xt\right) \right) \\
& =\frac{1}{\alpha +1}D_{t}\left( \left( F\left( t\right) \right) ^{\alpha
+1}\exp \left( xt\right) \right) +\frac{\alpha x}{\alpha +1}\left( F\left( t\right)
\right) ^{\alpha +1}\exp \left( xt\right)\\
&=\sum_{n\geq 0}\left( \frac{1}{\alpha +1}f_{n+1}^{\left( \alpha +1\right)
}\left( x\right) +\frac{\alpha x}{\alpha +1}f_{n}^{\left( \alpha +1\right) }\left(
x\right) \right) \frac{t^{n}}{n!}
\end{align*}
which implies the second identity.
\end{proof}

\begin{remark} The second identity of Lemma \ref{LL} can be extended as:
\begin{align*}
\left( \alpha +2\right) \left( \alpha +1\right) \left( \textbf{A}+x\right)
^{2}&f_{n}^{\left( \alpha \right) }\left( \textbf{A}+x\right)  =f_{n+2}^{\left(
\alpha +2\right) }\left( x\right) -2xf_{n+1}^{\left( \alpha +2\right)
}\left( x\right) +x^{2}f_{n}^{\left( \alpha +2\right) }\left( x\right)
\\ & +2\left( \alpha +2\right) xf_{n+1}^{\left( \alpha +1\right) }\left(
x\right) +\left( \alpha +2\right)\left( \alpha -1\right) x^{2}f_{n}^{\left( \alpha +1\right) }\left(x\right) .
\end{align*}
\end{remark}

\noindent However, Corollary \ref{CC} can be generalized via Lemma \ref{LL}, as follows.
\begin{corollary}
Let $\alpha$ be a real number and let $n,m,p$ be non-negative integers such that $p\leq \min(n,m).$ There holds%
\begin{align*}
\sum_{k=0}^{n}\binom{n}{k}&\dbinom{m+k}{p}y^{n-k}f_{m-p+1+k}^{\left(
\alpha \right) }\left( x\right)\\ & =
\sum_{k=0}^{m}\binom{m}{k}\dbinom{n+k}{p}\left( -y\right) ^{m-k}\left(
f_{n-p+1+k}^{\left( \alpha \right) }\left( x+y\right) -y
f_{n-p+k}^{\left( \alpha \right) }\left( x+y\right)\right).
\end{align*}%
\end{corollary}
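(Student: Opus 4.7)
The plan is to adapt the umbral proof of Corollary \ref{CC} by inserting one extra multiplication by an umbral factor. Since $(f_{n}^{(\alpha)})_{n\ge 0}$ is itself an Appell polynomial sequence (with generating function $F(t)^{\alpha}e^{xt}$), it admits an umbral representation $f_{n}^{(\alpha)}(x)=(\mathbf{B}+x)^{n}$, where $\mathbf{B}$ denotes the umbra whose moments are $\mathbf{B}^{k}=f_{k}^{(\alpha)}(0)$. Expanding $(\mathbf{B}+x+y)^{n}(\mathbf{B}+x)^{m}$ in two ways exactly as in Proposition \ref{P0}, and then differentiating $p$ times with respect to $x$ as in the derivation of Corollary \ref{CC}, would produce the polynomial identity
\begin{equation*}
\sum_{k=0}^{n}\binom{n}{k}\binom{m+k}{p}y^{n-k}(\mathbf{B}+x)^{m-p+k}
=\sum_{k=0}^{m}\binom{m}{k}\binom{n+k}{p}(-y)^{m-k}(\mathbf{B}+x+y)^{n-p+k}.
\end{equation*}

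Next, I would multiply both sides of this umbral identity by the factor $\mathbf{B}+x$. On the left this immediately yields $\sum_{k=0}^{n}\binom{n}{k}\binom{m+k}{p}y^{n-k}(\mathbf{B}+x)^{m-p+1+k}$, which upon umbral evaluation is the LHS of the claimed identity. On the right, the substitution $\mathbf{B}+x=(\mathbf{B}+x+y)-y$ gives
\begin{equation*}
(\mathbf{B}+x)(\mathbf{B}+x+y)^{n-p+k}=(\mathbf{B}+x+y)^{n-p+1+k}-y(\mathbf{B}+x+y)^{n-p+k},
\end{equation*}
which evaluates term by term to $f_{n-p+1+k}^{(\alpha)}(x+y)-yf_{n-p+k}^{(\alpha)}(x+y)$, matching the bracketed factor on the right-hand side of the stated identity.

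The principal subtlety is justifying the ``multiplication by $\mathbf{B}+x$''. This is legitimate because the umbral form of Corollary \ref{CC} is, before evaluation, a genuine polynomial identity in the indeterminate $\mathbf{B}$ (it arises from expanding the same polynomial $(\mathbf{B}+x+y)^{n}(\mathbf{B}+x)^{m}$ in two ways and then differentiating in $x$, operations that preserve polynomial equality). Multiplication by any polynomial in $\mathbf{B}$ therefore produces another polynomial identity, and the linear umbral evaluation $\mathbf{B}^{k}\mapsto f_{k}^{(\alpha)}(0)$ may then be applied term by term. The only remaining concern is the use of $\mathbf{B}$ for non-integer $\alpha$, which is handled by treating $\mathbf{B}$ as a formal symbol with its prescribed evaluation rule, exactly as the paper already treats $\mathbf{A}$ in Lemma \ref{LL}.
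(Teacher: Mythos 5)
Your proof is correct, but it follows a genuinely different route from the paper's. The paper starts from Corollary \ref{CC} applied to the Appell sequence $f^{(\alpha-1)}$, multiplies by $x$, substitutes $x\mapsto \mathbf{A}+x$ where $\mathbf{A}$ is the umbra of the base sequence, invokes both parts of Lemma \ref{LL} to raise the order from $\alpha-1$ to $\alpha$ (this is where the terms $f^{(\alpha)}_{k+1}$ appear), and finally cancels the residual terms by another application of Corollary \ref{CC} at order $\alpha$; this route carries an implicit division by $\alpha$, covered by the fact that both sides are polynomials in $\alpha$. You instead observe that $f^{(\alpha)}$ is itself an Appell sequence with its own umbra $\mathbf{B}$, rerun Proposition \ref{P0} and Corollary \ref{CC} at the pre-evaluation level, multiply by $\mathbf{B}+x$, and split $\mathbf{B}+x=(\mathbf{B}+x+y)-y$ on the right before evaluating. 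Your justification of the multiplication step is the right one: the identity of Corollary \ref{CC} holds as a polynomial identity in the indeterminate $\mathbf{B}$ (both sides being $\tfrac{1}{p!}D_x^p$ of the same polynomial $(\mathbf{B}+x+y)^n(\mathbf{B}+x)^m$), so multiplying by a polynomial in $\mathbf{B}$ before applying the linear evaluation $\mathbf{B}^k\mapsto f_k^{(\alpha)}(0)$ is legitimate, whereas multiplying two already-evaluated umbral identities would not be. Your argument is shorter, bypasses Lemma \ref{LL} and the cancellation step entirely, and has no exceptional value of $\alpha$ to worry about; the paper's argument, on the other hand, illustrates the order-raising mechanism of Lemma \ref{LL} that it reuses elsewhere.
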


\begin{proof}
From Corollary \ref{CC} we have%
\begin{align*}
\sum_{k=0}^{n}\binom{n}{k}\dbinom{m+k}{p}y^{n-k}x
f_{m-p+k}^{\left( \alpha-1 \right) }\left( x\right) =\sum_{k=0}^{m}\binom{m}{k}%
\dbinom{n+k}{p}\left( -y\right) ^{m-k}x f_{n-p+k}^{\left(
\alpha-1 \right) }\left( x+y\right) .
\end{align*}%
To obtain the desired identity, replace $x$ by $\textbf{A}+x$ and apply Lemma \ref{LL}, after that simplify the obtained identity by Corollary \ref{CC}.
\end{proof}

\noindent Other results are given by the following propositions.
\begin{proposition}
\label{P3} Let $n$ be a non-negative integer, let $\alpha$ be a real number and
let $(u_{k}), (v_{k})$ be two sequences of real numbers. If%
\begin{equation*}
\sum_{k=0}^{n}U\left( n,k\right) f_{k}\left( x+u_{k}\right) =\sum_{k=0}^{n}V\left(
n,k\right) (x+v_{k})^{k}
\end{equation*}%
for some real sequences $\left( U\left( n,k\right) ;0\leq k\leq n\right) $
and $\left( V\left( n,k\right) ;0\leq k\leq n\right) ,$ then%
\begin{align*}
\sum_{k=0}^{n}U\left( n,k\right) &f_{k}^{\left( \alpha \right) }\left(
x+u_{k}\right) =\sum_{k=0}^{n}V\left(n,k\right) f_{k}^{\left( \alpha
-1\right) }\left( x+v_{k}\right), \\
\alpha \sum_{k=0}^{n}U\left( n,k\right)& \left( f_{k+1}^{\left( \alpha
+1\right) }\left( x+u_{k}\right) -u_{k}f_{k}^{\left( \alpha +1\right)
}\left( x+u_{k}\right) \right)  \\
&=\sum_{k=0}^{n}V\left(n,k\right) \left( \left( \alpha +1\right)
f_{k+1}^{\left( \alpha \right) }\left( x+v_{k}\right) -\left( x+\left(
\alpha +1\right) v_{k}\right) f_{k}^{\left( \alpha \right) }\left(
x+v_{k}\right) \right).
\end{align*}%
\end{proposition}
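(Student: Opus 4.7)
My plan is to prove the two identities in sequence, each by a suitable umbral substitution in the hypothesis combined with Lemma \ref{LL}.

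For the first identity, I introduce a new umbra $\mathbf{B}$, independent of $\mathbf{A}$, with exponential generating function $F(t)^{\alpha-1}$. Then $(\mathbf{B}+y)^k=f_k^{(\alpha-1)}(y)$ and $(\mathbf{A}+\mathbf{B}+y)^k=f_k^{(\alpha)}(y)$, since $\mathbf{A}+\mathbf{B}$ has generating function $F(t)\cdot F(t)^{\alpha-1}=F(t)^{\alpha}$. Replacing $x$ by $\mathbf{B}+x$ in the hypothesis transforms its two sides into $\sum_k U(n,k)(\mathbf{A}+\mathbf{B}+x+u_k)^k=\sum_k U(n,k) f_k^{(\alpha)}(x+u_k)$ and $\sum_k V(n,k)(\mathbf{B}+x+v_k)^k=\sum_k V(n,k) f_k^{(\alpha-1)}(x+v_k)$, which is precisely the first conclusion.

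For the second identity, I multiply the first by $x$ and then substitute $x\mapsto \mathbf{A}+x$ to obtain
\[(\mathbf{A}+x)\sum_k U(n,k) f_k^{(\alpha)}(\mathbf{A}+x+u_k) = (\mathbf{A}+x)\sum_k V(n,k) f_k^{(\alpha-1)}(\mathbf{A}+x+v_k).\]
On each side I rewrite $\mathbf{A}+x$ as $(\mathbf{A}+x+u_k)-u_k$ and $(\mathbf{A}+x+v_k)-v_k$ respectively, and apply Lemma \ref{LL}. Its first identity collapses $f_k^{(\alpha)}(\mathbf{A}+x+u_k)$ to $f_k^{(\alpha+1)}(x+u_k)$; its second identity, used with parameter $\alpha$ on the left and $\alpha-1$ on the right, expands the products $(\mathbf{A}+x+u_k)f_k^{(\alpha)}(\mathbf{A}+x+u_k)$ and $(\mathbf{A}+x+v_k)f_k^{(\alpha-1)}(\mathbf{A}+x+v_k)$ in terms of the families $\{f_{k+1}^{(\alpha+1)},f_k^{(\alpha+1)}\}$ and $\{f_{k+1}^{(\alpha)},f_k^{(\alpha)}\}$ respectively. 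Multiplying through by $\alpha(\alpha+1)$ to clear denominators and collecting terms gives
\begin{align*}
&\alpha\sum_k U(n,k)\bigl[f_{k+1}^{(\alpha+1)}(x+u_k)-u_k f_k^{(\alpha+1)}(x+u_k)\bigr] + \alpha^2 x\sum_k U(n,k) f_k^{(\alpha+1)}(x+u_k) \\
&\quad = (\alpha+1)\sum_k V(n,k)\bigl[f_{k+1}^{(\alpha)}(x+v_k)-v_k f_k^{(\alpha)}(x+v_k)\bigr] + (\alpha^2-1)x\sum_k V(n,k) f_k^{(\alpha)}(x+v_k).
\end{align*}

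Finally, I invoke the first identity once more with $\alpha$ replaced by $\alpha+1$, which equates $\sum_k U(n,k) f_k^{(\alpha+1)}(x+u_k)$ with $\sum_k V(n,k) f_k^{(\alpha)}(x+v_k)$. The numerical coincidence $(\alpha^2-1)-\alpha^2=-1$ then merges the residual $x$-multiplied sums into the single term $-x\sum_k V(n,k) f_k^{(\alpha)}(x+v_k)$, which combines with $-(\alpha+1)v_k f_k^{(\alpha)}(x+v_k)$ to deliver the advertised coefficient $-(x+(\alpha+1)v_k) f_k^{(\alpha)}(x+v_k)$. The main delicate step throughout is the bookkeeping of the parameter shift between $\alpha$ and $\alpha-1$ on the two sides; once the Lemma \ref{LL} expansions are correctly aligned, the final cancellation is forced.
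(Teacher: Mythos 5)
Your proof is correct, and the two halves compare differently with the paper's. For the second identity you do essentially what the paper does: multiply the first identity by $x$, substitute $x\mapsto\mathbf{A}+x$, expand both sides with Lemma \ref{LL} (with parameter $\alpha$ on the left and $\alpha-1$ on the right), and cancel the residual terms using the first identity at level $\alpha+1$; your bookkeeping, including the coefficients $\alpha^2x$ and $(\alpha^2-1)x$ and the final merge into $-(x+(\alpha+1)v_k)$, checks out. For the first identity, however, you take a genuinely different route. The paper proves that $f_n^{(\alpha)}(x)$ is a polynomial in $\alpha$ (via partial Bell polynomials), defines $Q(\alpha)$ as the difference of the two sides, shows $Q(s)=0$ for every non-negative integer $s$ by iterating the substitution $x\mapsto\mathbf{A}+x$ starting from the hypothesis $Q(1)=0$, and concludes $Q\equiv 0$ because a polynomial with infinitely many roots vanishes. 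You instead introduce a single auxiliary umbra $\mathbf{B}$ with generating function $F(t)^{\alpha-1}$ and perform one substitution $x\mapsto\mathbf{B}+x$; since $F(0)=1$, the series $F(t)^{\alpha-1}$ is well defined for any real $\alpha$, the identity $f_k(\mathbf{B}+y)=f_k^{(\alpha)}(y)$ follows by the same generating-function computation as in Lemma \ref{LL}, and multiplicativity of evaluation over distinct umbrae gives $(\mathbf{A}+\mathbf{B}+y)^k=f_k^{(\alpha)}(y)$. Your version reaches arbitrary real $\alpha$ in one step and dispenses with both the polynomiality lemma and the interpolation argument, at the cost of invoking the (standard) formalism of independent umbrae; the paper's version stays within the single umbra $\mathbf{A}$ but needs the extra degree-in-$\alpha$ machinery.
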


\begin{proof}
On the one hand, we prove that
$f_{n}^{\left(  \alpha \right)  }\left(  x\right)  $ is a polynomial in $\alpha $ of degree at most $n.$ Indeed, by definition,  $f_{n}^{\left(  \alpha \right)  }\left(  x\right)$ can be written as
$f_{n}^{\left(  \alpha \right)  }\left(  x\right)   =\sum_{k=0}^{n}\binom
{n}{k}g_{k}\left(  \alpha \right)  x^{n-k},$
where $\left(  g_{n}\left(  \alpha \right)  \right)  _{n\geq 0}\ $\ is a sequence of
binomial type with exponential generating function $\left(F\left(t\right)\right) ^{\alpha
}$. The known relation
$g_{n}\left(  \alpha \right)=  \sum_{k=0}^{n}\mathcal{B}_{n,k}\left(g_{j}(1)\right)  \left(\alpha \right)  _{k}$ \cite{com,rom} proves the first part of this proof,
where $\mathcal{B}_{n,k}\left(  x_{j}\right):=\mathcal{B}_{n,k}\left(  x_{1},x_{2},\ldots \right) $
are the partial Bell polynomials \cite{com,miho,miho1}. On the other hand, let $Q$ be the polynomial:
\begin{equation*}
Q\left(  \alpha \right)=\sum_{k=0}^{n}U(n,k)f_{k}^{\left(  \alpha \right)  }\left(  x+u_{k}\right)-\sum_{k=0}^{n}V(n,k)f_{k}^{\left(  \alpha -1\right)  }\left(  x+v_{k}\right).
\end{equation*}
Since $f_{n}^{\left(  0\right)  }\left(  x\right)=x^{n},$ the hypothesis shows that $Q\left(  1 \right)=0.$ By replacing $x$ by $\textbf{A}+x$ in $Q\left(  1 \right)$  we obtain, in virtu of Lemma \ref{LL}, $Q\left(  2 \right)=0.$ So, by the same process we can state that $Q\left( s \right)=0$ for all non-negative integer $s.$
So, the polynomial $Q$ has infinity roots which state that necessarily $Q\left(  \alpha \right)=0$ for all real number $\alpha.$ Hence, the first identity of the proposition follows.
Multiply the obtained identity by $x$ and change $x$ by $\textbf{A}+x$ to obtain via Lemma \ref{LL}:%
\begin{align*}
&\alpha \sum_{k=0}^{n}U\left( n,k\right) \left( f_{k+1}^{\left( \alpha
+1\right) }\left( x+u_{k}\right) +\alpha \left( x+u_{k}\right) f_{k}^{\left(
\alpha +1\right) }\left( x+u_{k}\right) -\left( \alpha +1\right)
u_{k}f_{k}^{\left( \alpha \right) }\left( x+u_{k}\right) \right) \\ &=
\left( \alpha +1\right)\times \\ & \ \ \ \ \ \ \ \sum_{k=0}^{n}V\left( n,k\right) \left(
f_{k+1}^{\left( \alpha \right) }\left( x+v_{k}\right) +\left( \alpha
-1\right) \left( x+v_{k}\right) f_{k}^{\left( \alpha \right) }\left(
x+v_{k}\right) -\alpha v_{k}f_{k}^{\left( \alpha \right) }\left(
x+v_{k}\right) \right).
\end{align*}%
So, use the first identity of this proposition to simplify this last one.%
\end{proof}

\noindent An application of proposition \ref{P3} on Abel's identity is given by the following corollary.

\begin{corollary}
Let $\alpha ,\beta ,q$ be real numbers. Then, there holds%
\begin{equation*}
\label{C1} f_{n}^{\left( \alpha +\beta \right) }\left( x+y\right) =\sum_{k=0}^{n}\binom{%
n}{k}\left( f_{k}^{\left( \alpha \right) }\left( x-qk\right)
+qkf_{k-1}^{\left( \alpha \right) }\left( x-qk\right) \right) f_{n-k}^{\left(
\beta \right) }\left( y+qk\right) .
\end{equation*}
\end{corollary}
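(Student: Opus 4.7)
The plan is to follow the approach of Proposition~\ref{P3}: first establish the $\alpha=0$ case of the Corollary as a direct consequence of Abel's identity combined with a single umbral substitution, and then lift from $\alpha=0$ to general real $\alpha$ using the polynomial-in-$\alpha$ argument and Lemma~\ref{LL}, exactly as in the proof of Proposition~\ref{P3}.

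First I would recall Abel's identity
\[
(x+y)^{n}=\sum_{k=0}^{n}\binom{n}{k}\,x\,(x-qk)^{k-1}(y+qk)^{n-k}
\]
and use $x(x-qk)^{k-1}=(x-qk)^{k}+qk(x-qk)^{k-1}$ to rewrite it as
\[
(x+y)^{n}=\sum_{k=0}^{n}\binom{n}{k}\bigl[(x-qk)^{k}+qk(x-qk)^{k-1}\bigr](y+qk)^{n-k}.
\]
Let $\mathbf{B}$ denote an umbra with $\exp(\mathbf{B}t)=F(t)^{\beta}$, so that $(\mathbf{B}+z)^{m}=f_{m}^{(\beta)}(z)$. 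Replacing $y$ by $\mathbf{B}+y$ in the rewritten Abel identity yields
\[
f_{n}^{(\beta)}(x+y)=\sum_{k=0}^{n}\binom{n}{k}\bigl[(x-qk)^{k}+qk(x-qk)^{k-1}\bigr]f_{n-k}^{(\beta)}(y+qk),
\]
which is exactly the $\alpha=0$ instance of the Corollary.

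Next I would introduce
\[
Q(\alpha)=f_{n}^{(\alpha+\beta)}(x+y)-\sum_{k=0}^{n}\binom{n}{k}\bigl[f_{k}^{(\alpha)}(x-qk)+qkf_{k-1}^{(\alpha)}(x-qk)\bigr]f_{n-k}^{(\beta)}(y+qk).
\]
As in the proof of Proposition~\ref{P3}, $Q$ is a polynomial in $\alpha$ (each $f_{m}^{(\alpha)}$ is a polynomial in $\alpha$ of degree at most $m$ via the partial Bell polynomial expansion recalled there). The preceding base case gives $Q(0)=0$. Replacing $x$ by $\textbf{A}+x$ in the identity $Q(s)=0$ and invoking Lemma~\ref{LL} converts $f_{k}^{(s)}(\textbf{A}+x-qk)$ into $f_{k}^{(s+1)}(x-qk)$ and $f_{n}^{(s+\beta)}(\textbf{A}+x+y)$ into $f_{n}^{(s+\beta+1)}(x+y)$, so it produces $Q(s+1)=0$. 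By induction $Q(s)=0$ for every nonnegative integer $s$; since $Q$ is polynomial in $\alpha$, this forces $Q(\alpha)=0$ for every real $\alpha$, which is the claim.

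The main obstacle is the base case: verifying that substituting $y\mapsto \mathbf{B}+y$ in Abel's identity produces the $\alpha=0$ instance of the Corollary. This rests on the fact that $F(t)^{\beta}$ admits a well-defined umbral representative whose moments reproduce $f_{m}^{(\beta)}(0)$, which is standard in the classical umbral calculus framework used throughout the paper. Once this is in place, the induction step using Lemma~\ref{LL} is a routine iteration of the move $x\mapsto \textbf{A}+x$ already employed in the proof of Proposition~\ref{P3}.
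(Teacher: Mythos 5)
Your proof is correct and follows the same overall strategy as the paper: start from Abel's identity in the form $(x+y)^n=\sum_{k=0}^{n}\binom{n}{k}\bigl[(x-qk)^k+qk(x-qk)^{k-1}\bigr](y+qk)^{n-k}$ and lift the order twice, once in the variable $y$ (to reach $\beta$) and once in $x$ (to reach $\alpha$). The only genuine difference is the first lift: the paper just applies Proposition \ref{P3} with $y$ as the active variable (the polynomial-in-the-order induction), whereas you introduce an auxiliary umbra $\mathbf{B}$ with $\exp(\mathbf{B}t)=F(t)^{\beta}$ and substitute $y\mapsto\mathbf{B}+y$ directly. That substitution is legitimate in this framework --- $F(t)^{\beta}$ has constant term $1$, so its coefficients define an umbra, and the substitution is a linear evaluation applied to a polynomial identity in $y$ --- and it buys you arbitrary real $\beta$ in a single stroke instead of an integer induction followed by the polynomial argument. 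Your second lift, from $\alpha=0$ to real $\alpha$, is exactly the paper's mechanism: $Q$ is a polynomial in $\alpha$, $Q(0)=0$, and $x\mapsto\mathbf{A}+x$ together with Lemma \ref{LL} advances $Q(s)=0$ to $Q(s+1)=0$, the factors $f_{n-k}^{(\beta)}(y+qk)$ passing through as constants in $x$; the paper compresses this into the phrase ``via Proposition \ref{P3}, for fixed $y$,'' so your spelled-out version is the same argument.
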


\begin{proof}
The Abel's identity%
\begin{align*}
\left( x+y\right) ^{n}=\sum_{k=0}^{n}\binom{n}{k}\left( \left(
x-qk\right) ^{k}+qk\left( x-qk\right) ^{k-1}\right) \left( y+qk\right) ^{n-k}
\end{align*}%
can be written as%
\begin{equation*}
f_{n}^{\left( 0\right) }\left( x+y\right) =\sum_{k=0}^{n}\binom{n}{k}\left( f_{k}^{\left( 0\right) }
\left( x-qk\right) +qkf_{k-1}^{\left(0\right) }\left( x-qk\right)\right)
f_{n-k}^{\left(0\right) }\left( y+qk\right)
\end{equation*}%
which implies for fixed $x,$ via Proposition \ref{P3}, the identity%
\begin{equation*}
f_{n}^{\left( \beta \right) }\left( x+y\right) =\sum_{k=0}^{n}\binom{n}{k}%
\left( f_{k}^{\left( 0\right) }\left( x-qk\right) +qkf_{k-1}^{\left(
0\right) }\left( x-qk\right) \right) f_{n-k}^{\left( \beta \right)
}\left( y+qk\right) ,
\end{equation*}%
and this implies, for fixed $y,$ the desired identity.
\end{proof}

\begin{example}
Let $\mathbb{B}_{n}^{\left(  \alpha \right)  }\left(  x\right)$ be the Bernoulli polynomials
of order $\alpha$ and $\mathbb{E}_{n}^{\left(  \alpha \right)  }\left(  x\right)$ be the Euler polynomials
of order $\alpha$ defined by  \cite{mag}: $\mathbb{B}_{n}^{\left(1\right)  }\left(  x\right)  =\mathbb{B}_{n}\left(  x\right)$,   $\mathbb{E}_{n}^{\left(1\right)  }\left(  x\right)  =\mathbb{E}_{n}\left(  x\right)$ and
$$
\sum_{n\geq0}\mathbb{B}_{n}^{\left(  \alpha \right)  }\left(  x\right)  \frac{t^{n}}%
{n!}=\left(  \frac{t}{e^{t}-1}\right)  ^{\alpha}e^{xt} ,  \ \
\sum_{n\geq 0}\mathbb{E}_{n}^{\left( \alpha \right) }\left( x\right) \frac{t^{n}}{n!}%
=\left( \frac{2}{e^{t}+1}\right) ^{\alpha }e^{xt} .
$$
Then, for any real number $\alpha,$ the identities given in \cite[Thms. 1,2]{xia} by%
\begin{align*}
\sum_{k=0}^{n}\binom{2n+1}{2k}\mathbb{E}_{2k}\left( x\right)  &=x^{2n+1}-\left(
x-1\right) ^{2n+1}, \\
\sum_{k=1}^{n}\binom{2n}{2k-1}\mathbb{E}_{2k-1}\left( x\right)  &=x^{2n}-\left(
x-1\right) ^{2n}, \\
\sum_{k=0}^{n}\binom{2n+1}{2k}\mathbb{B}_{2k}\left( x\right)  &=\left( n+\frac{1}{2}%
\right) \left( x^{2n}+\left( x-1\right) ^{2n}\right) , \\
\sum_{k=1}^{n}\binom{2n}{2k-1}\mathbb{B}_{2k-1}\left( x\right)  &=n\left(
x^{2n-1}+\left( x-1\right) ^{2n-1}\right)
\end{align*}%
imply, via Proposition \ref{P3}, the following identities
\begin{align*}
\sum_{k=0}^{n}\binom{2n+1}{2k}\mathbb{E}_{2k}^{\left( \alpha \right) }\left( x\right)
&=\mathbb{E}_{2n+1}^{\left( \alpha -1\right) }\left( x\right) -\mathbb{E}_{2n+1}^{\left(
\alpha -1\right) }\left( x-1\right) , \\
\sum_{k=1}^{n}\binom{2n}{2k-1}\mathbb{E}_{2k-1}^{\left( \alpha \right) }\left(
x\right)  &=\mathbb{E}_{2n}^{\left( \alpha -1\right) }\left( x\right)
-\mathbb{E}_{2n}^{\left( \alpha -1\right) }\left( x-1\right), \\
\sum_{k=0}^{n}\binom{2n+1}{2k}\mathbb{B}_{2k}^{\left( \alpha \right) }\left( x\right)
&=\left( n+\frac{1}{2}\right) \left( \mathbb{B}_{2n}^{\left( \alpha -1\right)
}\left( x\right) +\mathbb{B}_{2n}^{\left( \alpha -1\right) }\left( x-1\right) \right)
, \\
\sum_{k=1}^{n}\binom{2n}{2k-1}\mathbb{B}_{2k-1}^{\left( \alpha \right) }\left(
x\right)  &=n\left( \mathbb{B}_{2n-1}^{\left( \alpha -1\right) }\left( x\right)
+\mathbb{B}_{2n-1}^{\left( \alpha -1\right) }\left( x-1\right) \right) .
\end{align*}
Also, by multiplying each identity of the last identities by $x$, after that, replacing $x$ by $\textbf{A}+x$ and applying Lemma \ref{LL}, we deduce, after simplification:

\begin{align*}
\alpha \sum_{k=0}^{n}\binom{2n+1}{2k}E_{2k+1}^{\left( \alpha +1\right)
}\left( x\right) &=\left( \alpha +1\right) E_{2n+2}^{\left( \alpha \right)
}\left( x\right) -xE_{2n+1}^{\left( \alpha \right) }\left( x\right) -\left(
\alpha +1\right) E_{2n+2}^{\left( \alpha \right) }\left( x-1\right) \\ & \ \ +\left(
x-\alpha -1\right) E_{2n+1}^{\left( \alpha \right) }\left( x-1\right) , \\
\alpha \sum_{k=0}^{n}\binom{2n}{2k-1}E_{2k}^{\left( \alpha +1\right) }\left(
x\right) &=\left( \alpha +1\right) E_{2n+1}^{\left( \alpha \right) }\left( x\right)
-xE_{2n}^{\left( \alpha \right) }\left( x\right) -\left( \alpha +1\right)
E_{2n+1}^{\left( \alpha \right) }\left( x-1\right)\\ & \ \ +\left( x-\alpha
-1\right) E_{2n}^{\left( \alpha \right) }\left( x-1\right) ,
\end{align*}
\begin{align*}
\frac{\alpha }{n+\frac{1}{2}}\sum_{k=0}^{n}\binom{2n+1}{2k}B_{2k+1}^{\left(
\alpha +1\right) }\left( x\right) &=\left( \alpha +1\right) B_{2n+1}^{\left(
\alpha \right) }\left( x\right) -xB_{2n}^{\left( \alpha \right) }\left(
x\right) +\left( \alpha +1\right) B_{2n+1}^{\left( \alpha \right) }\left(
x-1\right) \\ & \ \ -\left( x+\alpha -1\right) B_{2n}^{\left( \alpha \right) }\left(
x-1\right) ,\\
\frac{\alpha }{n}\sum_{k=1}^{n}\binom{2n}{2k-1}B_{2k}^{\left( \alpha
+1\right) }\left( x\right) &=\left( \alpha +1\right) B_{2n}^{\left( \alpha
\right) }\left( x\right) -xB_{2n-1}^{\left( \alpha \right) }\left( x\right)
+\left( \alpha +1\right) B_{2n}^{\left( \alpha \right) }\left( x-1\right)
\\ & \ \ -\left( x+\alpha -1\right) B_{2n-1}^{\left( \alpha \right) }\left(
x-1\right).
\end{align*}
\end{example}

\begin{remark} Similarly, the most of the identities on Bernoulli polynomials given in \cite{Ruiz} can be generalized by Proposition \ref{P3}.
\end{remark}

\begin{proposition}\label{P}
Let $m,n$ be non-negative integers, let $\alpha$ be a real number and
let $(u_{k}), (v_{k})$ be two sequences of real numbers. If
\begin{equation*}
\sum_{k=0}^{n}U\left( n,k\right) \left( x+u_{k}\right)
^{k}=\sum_{k=0}^{n}V\left( n,k\right) \left( x+v_{k}\right) ^{k}
\end{equation*}%
for some real sequences $\left(U(n,k); 0\leq k\leq n\right)$ and $\left(V(m,k);0\leq k\leq m \right),$ then
\begin{align*}
\sum_{k=0}^{n}U\left( n,k\right) &f_{k}^{\left( \alpha \right) }\left(
x+u_{k}\right)  =\sum_{k=0}^{n}V\left( n,k\right) f_{k}^{\left( \alpha \right)
}\left( x+v_{k}\right) , \\
\sum_{k=0}^{n}U\left( n,k\right) &\left( f_{k+1}^{\left( \alpha \right)
}\left( x+u_{k}\right) -u_{k}f_{k}^{\left( \alpha \right) }\left( x+u_{k}\right) \right) \\
&=\sum_{k=0}^{n}V\left( n,k\right) \left( f_{k+1}^{\left( \alpha \right)
}\left( x+v_{k}\right) -v_{k}f_{k}^{\left( \alpha \right) }\left( x+v_{k}\right) \right).
\end{align*}
\end{proposition}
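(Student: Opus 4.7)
The plan is to mirror the strategy of Proposition \ref{P3}, noting that since $f_{k}^{(0)}(y)=y^{k}$, the hypothesis is exactly the first conclusion at $\alpha=0$. For the first identity, I would form
$$Q(\alpha) := \sum_{k=0}^{n}U(n,k)f_{k}^{(\alpha)}(x+u_{k}) - \sum_{k=0}^{n}V(n,k)f_{k}^{(\alpha)}(x+v_{k}),$$
which, as observed in the proof of Proposition \ref{P3}, is a polynomial in $\alpha$ of degree at most $n$. The hypothesis gives $Q(0)=0$. Replacing $x$ by $\textbf{A}+x$ in the relation $Q(0)=0$ and invoking the first identity of Lemma \ref{LL} (which sends $f_{k}^{(\alpha)}(\textbf{A}+y)$ to $f_{k}^{(\alpha+1)}(y)$) upgrades it to $Q(1)=0$; iterating the substitution yields $Q(s)=0$ for every non-negative integer $s$, so $Q\equiv 0$ as a polynomial and the identity holds for all real $\alpha$.

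For the second identity, I would multiply the first identity (now established) by $x$, then replace $x$ by $\textbf{A}+x$, producing
$$\sum_{k=0}^{n}U(n,k)(\textbf{A}+x)f_{k}^{(\alpha)}(\textbf{A}+x+u_{k}) = \sum_{k=0}^{n}V(n,k)(\textbf{A}+x)f_{k}^{(\alpha)}(\textbf{A}+x+v_{k}).$$
In each summand I would write $\textbf{A}+x=(\textbf{A}+x+u_{k})-u_{k}$ (respectively with $v_{k}$) and apply both identities of Lemma \ref{LL}: the first turns $f_{k}^{(\alpha)}(\textbf{A}+x+u_{k})$ into $f_{k}^{(\alpha+1)}(x+u_{k})$, while the second, used with $y=x+u_{k}$, rewrites $(\textbf{A}+x+u_{k})f_{k}^{(\alpha)}(\textbf{A}+x+u_{k})$ as a combination of $f_{k+1}^{(\alpha+1)}(x+u_{k})$ and $(x+u_{k})f_{k}^{(\alpha+1)}(x+u_{k})$ divided by $\alpha+1$.

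The main (and essentially only) obstacle is the bookkeeping that follows. After substitution and clearing the factor $\alpha+1$, both sides carry parasitic contributions of the form $\alpha x\sum_{k}U(n,k)f_{k}^{(\alpha+1)}(x+u_{k})$ on the left and $\alpha x\sum_{k}V(n,k)f_{k}^{(\alpha+1)}(x+v_{k})$ on the right; these cancel against each other by the first identity of this proposition applied at level $\alpha+1$. What remains is exactly the target identity with $\alpha$ replaced by $\alpha+1$, and since this holds for every real $\alpha$ it is equivalent to the stated form.
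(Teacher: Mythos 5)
Your proof is correct and follows essentially the same route as the paper: the polynomial-in-$\alpha$ argument (with $Q(0)=0$ from the hypothesis and the iterated substitution $x\mapsto\mathbf{A}+x$) for the first identity, then multiplying by $x$, substituting $x\mapsto\mathbf{A}+x$, splitting $\mathbf{A}+x$ as $(\mathbf{A}+x+u_k)-u_k$, applying Lemma \ref{LL}, and cancelling the $\alpha x$-terms via the first identity. The only cosmetic difference is that the paper starts from level $\alpha-1$ so as to land directly at level $\alpha$, whereas you start at level $\alpha$, land at level $\alpha+1$, and re-index at the end.
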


\begin{proof}
The first identity can be proved similarly as the first one of Proposition \ref{P3}.
So, by multiplying their two sides by $x,$ after that
replacing $x$ by $\textbf{A}+x$ we obtain%
\begin{equation*}
\sum_{k=0}^{n}U\left( n,k\right) \left( \textbf{A}+x\right) f_{k}^{\left( \alpha
-1\right) }\left( \textbf{A}+x+u_{k}\right) =\sum_{k=0}^{n}V\left( n,k\right) \left(
\textbf{A}+x\right) f_{k}^{\left( \alpha -1\right) }\left( \textbf{A}+x+v_{k}\right)
\end{equation*}%
which can written via Lemma \ref{LL}%
\begin{align*}
&\sum_{k=0}^{n}U\left( n,k\right) \left( f_{k+1}^{\left( \alpha \right)
}\left( x+u_{k}\right) +\left(\left( \alpha -1\right) x-u_{k}\right) f_{k}^{\left(
\alpha \right) }\left( x+u_{k}\right)  \right)  \\
&=\sum_{k=0}^{n}V\left( n,k\right) \left( f_{k+1}^{\left( \alpha \right)
}\left( x+v_{k}\right) +\left(\left( \alpha -1\right) x-v_{k}\right) f_{k}^{\left(
\alpha \right) }\left( x+v_{k}\right)  \right),
\end{align*}%
hence, use first the identity of this Proposition to simply this last one.
\end{proof}

\begin{example}
For any real numbers $\lambda, \alpha,$ the Ljunggren's identity \cite{lju}%
\begin{equation*}
\sum_{k=0}^{n}\binom{n}{k}\binom{\lambda }{k}y^{k}\left( x+y\right)
^{n-k}=\sum_{k=0}^{n}\binom{n}{k}\binom{\lambda +k}{k}y^{k}x^{n-k}
\end{equation*}%
implies, via Proposition \ref{P}, the following identities
\begin{align*}
\label{a} \sum_{k=0}^{n}\binom{n}{k}\binom{\lambda }{k}y^{k}f_{n-k}^{\left( \alpha
\right) }\left( x+y\right) &=\sum_{k=0}^{n}\binom{n}{k}\binom{\lambda +k}{k}%
y^{k}f_{n-k}^{\left( \alpha \right) }\left( x\right), \\
\sum_{k=0}^{n}\binom{n}{k}\binom{\lambda }{k}y^{k}\left( f_{n+1-k}^{\left(
\alpha \right) }\left( x+y\right) -yf_{n-k}^{\left( \alpha \right) }\left(
x+y\right) \right) &=\sum_{k=0}^{n}\binom{n}{k}\binom{\lambda +k}{k}%
y^{k}f_{n+1-k}^{\left( \alpha \right) }\left( x\right).
\end{align*}
\end{example}

\begin{example}
For any real numbers $\alpha, \beta, \gamma,$ the Munarini's identity \cite{mun}
\begin{equation*}
\sum_{k=0}^{n}\binom{\gamma }{k}\binom{\beta -\gamma +n}{n-k}y^{k}\left(
x+y\right) ^{n-k}=\sum_{k=0}^{n}\binom{\gamma}{n-k}\binom{\beta +k}{k}y^{k}x^{n-k}
\end{equation*}%
implies, via Proposition \ref{P}, the following identities
\begin{align*}
\sum_{k=0}^{n}\binom{\gamma }{k}\binom{\beta -\gamma +n}{n-k}%
y^{k}&f_{n-k}^{\left( \alpha \right) }\left( x+y\right) =\sum_{k=0}^{n}\binom{%
\gamma}{n-k}\binom{\beta +k}{k}y^{k}f_{n-k}^{\left( \alpha\right) }\left( x\right), \\
\sum_{k=0}^{n}\binom{\gamma }{k}\binom{\beta -\gamma +n}{n-k}y^{k}&\left(
f_{n+1-k}^{\left( \alpha \right) }\left( x+y\right) - yf_{n-k}^{\left( \alpha \right) }\left( x+y\right) \right) \\
&=\sum_{k=0}^{n}\binom{\gamma}{n-k}\binom{\beta +k}{k} y^{k} f_{n+1-k}^{\left( \alpha \right) }\left( x\right).
\end{align*}
\end{example}

\begin{example}
The Simons's identity \cite{sim}%
\begin{equation*}
\sum_{k=0}^{n}\left( -1\right) ^{n+k}\frac{\left( n+k\right) !}{\left(
n-k\right) !\left( k!\right) ^{2}}\left( x+1\right) ^{k}=\sum_{k=0}^{n}\frac{%
\left( n+k\right) !}{\left( n-k\right) !\left( k!\right) ^{2}}x^{k}
\end{equation*}%
implies, via Proposition \ref{P}, the following identities%
\begin{align*}
\sum_{k=0}^{n}\left( -1\right) ^{n+k}\frac{\left( n+k\right) !}{\left(
n-k\right) !\left( k!\right) ^{2}}f_{k}^{\left( \alpha \right) }\left(
x+1\right) &=\sum_{k=0}^{n}\frac{\left( n+k\right) !}{\left( n-k\right)
!\left( k!\right) ^{2}}f_{k}^{\left( \alpha \right) }\left( x\right), \\
\sum_{k=0}^{n}\left( -1\right) ^{n+k}\frac{\left( n+k\right) !}{\left(
n-k\right) !\left( k!\right) ^{2}}\left( f_{k+1}^{\left( \alpha \right)
}\left( x+1\right) -f_{k}^{\left( \alpha \right) }\left( x+1\right) \right)
&=\sum_{k=0}^{n}\frac{\left( n+k\right) !}{\left( n-k\right) !\left(
k!\right) ^{2}}f_{k+1}^{\left( \alpha \right) }\left( x\right).
\end{align*}
\end{example}

\begin{example}
The known identity \cite[Id. 3.17]{gou}%
\begin{equation*}
\sum_{k=0}^{n}\dbinom{n}{k}\dbinom{m}{k}\left( -1\right) ^{k}B_{k}\left(
x\right) =\sum_{k=0}^{n}\dbinom{n}{k}\dbinom{m+k}{k}B_{n-k}\left( -x\right)
\end{equation*}%
can be  written, on using the known identity $B_{n}(-x)=(-1)^{n}B_{n}(x+1)$, as
\begin{equation*}
\sum_{k=0}^{n}\dbinom{n}{k}\dbinom{m}{k}\left( -1\right) ^{k}B_{k}\left(
x\right) =\sum_{k=0}^{n}\dbinom{n}{k}\dbinom{m+k}{k}(-1)^{n-k}B_{n-k}\left( x+1\right)
\end{equation*}%
which now can be generalized, similarly to the proof of Proposition \ref{P3}, as
\begin{align*}
\sum_{k=0}^{n}\dbinom{n}{k}\dbinom{m}{k}&\left( -1\right) ^{k}B_{k}^{\left(
\alpha \right) }\left( x\right)  =\sum_{k=0}^{n}\dbinom{n}{k}%
\dbinom{m+k}{k}\left( -1\right) ^{n-k}B_{n-k}^{\left( \alpha \right) }\left( x+1\right) , \\
\sum_{k=0}^{n}\dbinom{n}{k}\dbinom{m}{k}&\left( -1\right) ^{k}
B_{k+1}^{\left( \alpha +1\right) }\left( x\right) \\
&=\sum_{k=0}^{n}\dbinom{n}{k}\dbinom{m+k}{k}\left( -1\right) ^{n-k}\left( B_{n+1-k}^{\left( \alpha
+1\right) }\left( x+1 \right)- B_{n-k}^{\left( \alpha
+1\right) }\left( x+1 \right) \right).
\end{align*}
\end{example}

\end{document}